\newcommand\comment[1]{}
\def\GG{{\mathbb G}}
\def\NN{{\mathbb N}}
\def\ZZ{{\mathbb Z}}
\def\QQ{{\mathbb Q}}
\def\G{{\mathcal G}}
\def\m{{\mathfrak m}}
\def\tR{\tilde{R}}
\def\id{{\rm id}}
\def\cat#1{{\sf #1}}
\def\vect#1{\text{\boldmath $#1$\unboldmath}} 
\def\isom{\cong}
\def\ideal{\unlhd}
\def\gener#1{\left\langle #1 \right\rangle}
\renewcommand{\th}[1]{\theta^{(#1)}}
\DeclareMathOperator{\Ker}{Ker}
\DeclareMathOperator{\Ima}{Im}
\DeclareMathOperator{\Coker}{Coker}
\DeclareMathOperator{\Aut}{Aut}
\DeclareMathOperator{\GL}{GL}
\DeclareMathOperator{\Mat}{Mat}
\DeclareMathOperator{\Gal}{Gal}
\DeclareMathOperator{\Quot}{Quot}
\DeclareMathOperator{\Spec}{Spec}
\newcommand{\uGal}{\underline{\Gal}}
\def\markdef{\bf }
\theoremstyle{plain}
\newtheorem{thm}{Theorem}[section]
\newtheorem{prop}[thm]{Proposition}
\theoremstyle{definition}
\newtheorem{defn}[thm]{Definition}
\newtheorem{exmp}[thm]{Example}
\newtheorem{rem}[thm]{Remark}
\newtheoremstyle{Acknowledgements}
  {}
    {}
     {}
     {}
    {\bfseries}
    {}
     {.5em}
    {\thmname{#1}\thmnumber{ }\thmnote{ (#3)}}
\theoremstyle{Acknowledgements}
\begin{document}

\title[Non-free ID-modules]{Non-free iterative differential modules}

\author{Andreas Maurischat}
\address{\rm {\bf Andreas Maurischat}, Lehrstuhl A f\"ur Mathematik, RWTH Aachen University, Germany }
\email{\sf andreas.maurischat@matha.rwth-aachen.de}
\thanks{}

\subjclass[2010]{12H20, 13B05}

\keywords{Differential Galois theory, iterative derivations}

\date{\today}

\begin{abstract}
In \cite{am:pvtdsr} we established a Picard-Vessiot theory over differentially simple rings which may not be fields. Differential modules over such rings were proven to be locally free but don't have to be free as modules. In this article, we give a family of examples of non-free differential modules, and compute Picard-Vessiot rings as well as Galois groups for them.
\end{abstract}

\maketitle

\section{Introduction}

Differential Galois theory -- and also difference Galois theory -- is a generalisation of classical Galois theory to transcendental extensions. Instead of polynomial equations, one considers differential resp. difference equations, and even iterative differential equations in positive characteristic.
One branch of differential Galois theory is Picard-Vessiot theory, the study of linear differential equations. In this case the Galois group turns out to be a linear algebraic group or more general an affine group scheme of finite type over the field of constants. 
Originally, one considered extension of fields as in the finite Galois theory.
However, this had to be extended for two main reasons: In difference Galois theory, zero-divisors may occur in minimal solution rings. The Galois group scheme does not act algebraically on the solution field but on the Picard-Vessiot ring, an important subring whose field of fractions is the solution field.\\
Therefore, it is natural also to replace the base field by a base ring with ``nice'' properties. This has been done in several settings (see e.g.~\cite{ya:dnctgdd}, \cite{ka-am:pveasma}, \cite{am:pvtdsr}) where the base ring is a simple ring, i.e.~has no nontrivial ideals stable under the extra structure. In \cite{am:capvt}, we even gave an abstract setting covering all existing Picard-Vessiot theories, the base ring being again a simple object.

Having a base ring instead of a field, the first point is that not all modules have to be free.
In \cite{am:pvtdsr}, it has been shown that all iterative differential modules (ID-modules) over a simple iterative differential ring are locally free which is enough to obtain all the machinery of Picard-Vessiot-rings and Galois groups.\footnote{Local freeness of the modules have also been shown for other settings e.g.~in \cite{ya:dnctgdd} or \cite{ka-am:pveasma}.}
But this raises the question whether there really exist ID-modules which are not free.\\
In this article, we answer this question by providing a family of ID-modules which are not free as modules. We also compute Picard-Vessiot rings and Galois groups for them. As we don't assume that the field of constants is algebraically closed, Picard-Vessiot rings for a fixed module are not unique, and our example will also show this effect.
As in characteristic zero simple iterative differential rings are the same as simple differential rings this also provides examples for differential modules in characteristic zero which are not free.

\smallskip

The article is organized as follows.\\
In Section 2, the  basic notation, and some basic examples of ID-rings are given. We proceed in Section 3 with recalling some properties of ID-simple rings and ID-modules over ID-simple rings. In our definition, ID-modules are finitely generated as modules.
Section 4 is dedicated to Picard-Vessiot rings for ID-modules and their Galois groups.
Finally in Section 5, we give an example of a non-free ID-module over some ID-simple ring, and compute some of its Picard-Vessiot rings as well as the corresponding Galois groups.

\section{Basic notation}\label{sec:notation}

All rings are assumed to be commutative with unit and different from $\{0\}$.
We will use the following notation (see also \cite{am:igsidgg}). An iterative 
derivation on a ring $R$ is a family of additive maps $(\th{n})_{n\in \NN}$ on $R$ satisfying
\begin{enumerate}
\item $\th{0}=\id_R$,
\item\label{item:leibniz-rule} $\th{n}(rs)=\sum_{i+j=n}\th{i}(r)\th{j}(s)$ for all $r,s\in R$, $n\in \NN$, as well as
\item\label{item:iteration-rule} $\th{i}\circ \th{j}=\binom{i+j}{i}\th{i+j}$ for all $i,j\in \NN$.
\end{enumerate}
Most times we will consider the map
$$\theta:R\to R[[T]], r\mapsto \sum_{n=0}^\infty \th{n}(r)T^n,$$
where $R[[T]]$ is the power series ring over $R$ in one variable $T$.\\
The conditions that the $\th{n}$ are additive, and condition \ref{item:leibniz-rule} are then equivalent to $\theta$ being a homomorphism of rings, and condition \ref{item:iteration-rule} is equivalent to the commutativity of the diagram
\centerline{\xymatrix@+10pt{
R \ar[r]^{\theta_U} \ar[d]_{\theta} & R[[U]]
\ar[d]^{U\mapsto U+T} \\
R[[T]] \ar[r]^{\theta_U[[T]]} & R[[U,T]],
}}
where $\theta_U$ is the map $\theta$ with $T$ replaced by $U$ and $\theta_U[[T]]$ is the $T$-linear extension of $\theta_U$.

The pair $(R, (\th{n})_{n\in \NN})$ or the pair $(R,\theta)$ is then called an {\markdef ID-ring} and
$$C_R:=\{ r \in R\mid \theta(r)=r\}=\{r \in R\mid \th{n}(r)=0\, \forall\, n>0\}$$ is called the {\markdef ring of
  constants } of $(R,\theta)$. An ideal $I\ideal R$ is called an
 {\markdef ID-ideal} if $\theta(I)\subseteq I[[T]]$ and $R$ is
 {\markdef ID-simple} if $R$ has no
 ID-ideals apart from $\{0\}$ and $R$. An ID-ring which is a field is called an {\markdef ID-field}.
Iterative derivations are extended to localisations by
 $\theta(\frac{r}{s}):=\theta(r)\theta(s)^{-1}$ and to tensor products
 by 
$$\theta^{(k)}(r\otimes s)=\sum_{i+j=k} \theta^{(i)}(r)\otimes
\theta^{(j)}(s)$$ 
for all $k\geq 0$.

A homomorphism of ID-rings $f:S\to R$ is a ring homomorphism $f:S\to R$ s.t. $\theta_R^{(n)}\circ f=f\circ \theta_S^{(n)}$ for all $n\geq 0$.

\medskip

An {\markdef ID-module} $(M,\theta_M)$ over an ID-ring $R$ is a finitely generated $R$-module $M$ together with an iterative derivation $\theta_M$ on $M$, i.e.~an additive map $\theta_M:M\to M[[T]]$ such that $\theta_M(rm)=\theta(r)\theta_M(m)$, $\theta_M^{(0)}=\id_M$ and
$\theta_M^{(i)}\circ \theta_M^{(j)}=\binom{i+j}{i}\theta_M^{(i+j)}$ for all $i,j\geq 0$. 

A subset $N\subseteq M$ of an ID-module $(M,\theta_M)$ is called {\markdef ID-stable}, if $\theta_M^{(n)}(N)\subseteq N$ for all $n\geq 0$. An {\markdef ID-submodule} of $(M,\theta_M)$ is an ID-stable $R$-submodule $N$ of $M$ which is finitely generated as $R$-module.\footnote{If $R$ is an ID-simple ring, then ID-stable submodules are always ID-submodules.}
For an ID-module $(M,\theta_M)$ and an ID-stable $R$-submodule $N\subseteq M$, the factor module $M/N$ is again an ID-module with the induced iterative derivation.

The free $R$-module $R^n$ is an example of an ID-module over $R$ with  iterative derivation given componentwise. An ID-module $(M,\theta_M)$ over $R$ is called {\markdef trivial} if $M\isom R^n$ as ID-modules, i.e.~if $M$ has a basis of constant elements.

For ID-modules $(M,\theta_M)$, $(N,\theta_N)$, the {\markdef direct sum} $M\oplus N$ is an ID-module with iterative derivation given componentwise, and the {\markdef tensor product} $M\otimes_R N$ is an ID-module with iterative derivation $\theta_\otimes$ given by $\theta_\otimes^{(k)}(m\otimes n):=\sum_{i+j=k} \theta_M^{(i)}(m)\otimes \theta_N^{(j)}(n)$  for all $k\geq 0$.

For ID-modules $(M,\theta_M)$, $(N,\theta_N)$, a {\markdef morphism} $f:(M,\theta_M)\to(N,\theta_N)$ of ID-modules is a homomorphism $f:M\to N$ of the underlying modules such that $\theta_N^{(k)}\circ f=f\circ \theta_M^{(k)}$ for all $k\geq 0$. For a morphism $f:(M,\theta_M)\to(N,\theta_N)$,  the kernel $\Ker(f)$ and the image $\Ima(f)$ are ID-stable $R$-submodules of $M$ resp.~$N$. The image $\Ima(f)$ is indeed an ID-submodule, since it is isomorphic to $M/\Ker(f)$. Also the cokernel $\Coker(f)$ is an ID-module.

\begin{exmp}\label{ex:ID-rings}
\begin{enumerate}
\item For any field $C$ and $R:=C[t]$, the homomorphism of $C$-algebras
$\theta_t:R \to R[[T]]$
given by $\theta_t(t):=t+T$ is an iterative derivation on $R$ with field of constants $C$. This
iterative derivation will be called the {\markdef iterative derivation with respect to~$t$}. $R$ is indeed an ID-simple ring, since for any polynomial $0 \ne f\in R$ of degree $n$, $\theta^{(n)}(f)$ equals the leading coefficient of $f$, and hence is invertible in $R=C[t]$.
\item\label{item:der by t} For any field $C$,  $C[[t]]$ also is an ID-ring with the iterative derivation with respect to $t$, given by $\theta_t(f(t)):=f(t+T)$ for $f\in C[[t]]$. The constants of $(C[[t]],\theta_t)$ are $C$, and $(C[[t]],\theta_t)$ also is ID-simple, since for 
$f=\sum_{i=n}^\infty a_i t^i\in C[[t]]$ with $a_n\ne 0$, one has
$$\theta_t^{(n)}(f)= \sum_{i=n}^\infty a_i \binom{i}{n} t^{i-n}\in C[[t]]^\times.$$
Hence, every non-zero ID-ideal contains a unit.
This ID-ring will play an important role, since every ID-simple ring can be ID-embedded into $C[[t]]$ for an appropriate field $C$.
\item For any ring $R$, there is the {\markdef trivial} iterative derivation on $R$ given by
$\theta_0:R\to R[[T]],r\mapsto r\cdot T^0$. Obviously, the ring of constants of $(R,\theta_0)$ is
$R$ itself.
\item Given a differential ring $(R,\partial)$ containing the rationals (i.e.~a $\QQ$-algebra $R$ with a derivation $\partial$), then $\th{n}:=\frac{1}{n!}\partial^n$ defines an iterative derivation on $R$. On the other hand, for an iterative derivation $\theta$, the map $\partial:=\theta^{(1)}$ always is a derivation, and $\th{n}$ equals $\frac{1}{n!}\partial^n$ by the iteration rule. Hence, differential rings containing $\QQ$ are special cases of ID-rings.\\
Since for a differentially simple ring in characteristic zero, its ring of constants always is a field (same proof as for ID-simple rings), we see that differentially simple rings in characteristic zero are a special case of ID-simple rings in arbitrary characteristic.
\end{enumerate}
\end{exmp}

\section{Properties of ID-simple rings and ID-modules over ID-simple rings}

We summarize some properties of ID-simple rings. Proofs can be found in  \cite{bhm-mvdp:ideac} or  \cite{am:pvtdsr}.

Throughout the section, let $(S,\theta)$ denote an ID-simple ring with constants $C$.

Then $S$ is an integral domain, and its ring of constants $C$ is a field. Furthermore, the field of fractions of $S$ has the same constants as $S$ (cf.~\cite[Lemma 3.2]{bhm-mvdp:ideac}).

If $\m\ideal S$ is a maximal ideal, and $k:=S/\m$ is the residue field. Then $(S,\theta)$ can be embedded into $(k[[t]],\theta_t)$ as ID-rings via
$$S\to k[[t]], s\mapsto \sum_{n=0}^\infty \overline{\th{n}(s)}t^n,$$
where $\overline{\th{n}(s)}$ denotes the image of $\th{n}(s)\in S$ in the residue field $k$
(cf.~\cite[Thm.~3.4]{am:pvtdsr}). This will be important later on, in the case that $k=C$.

\medskip

Now consider an ID-module $(M,\theta_M)$ over the ID-simple ring $(S,\theta)$.
Such an ID-module is projective as $S$-module, i.e.~locally free (cf.~\cite[Thm.~4.3]{am:pvtdsr}). In particular, if $S$ is a local ring, the ID-module is free as a module.

For the special ID-simple ring $(C[[t]],\theta_t)$, we even have:
\begin{thm}\label{thm:trivial-over-ct} (cf.~\cite[Thm.~4.6]{am:pvtdsr})\\
Let $C$ be a field. Then every ID-module over $(C[[t]],\theta_t)$ is trivial.
\end{thm}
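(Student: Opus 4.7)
The plan is to construct, for every element $m\in M$, a canonical ``constant representative'' $\phi(m)$ obtained by formally evaluating $\theta_M(m)$ at $T=-t$, and then show that this map $\phi$ sends an $S$-basis of $M$ to a basis of constants. Since $S:=C[[t]]$ is a complete local ring and $M$ is finitely generated, $M$ is $(t)$-adically complete; moreover, because $M$ is locally free over the local ring $S$, it is in fact free, of some rank $n$. Pick an $S$-basis $m_1,\ldots,m_n$ of $M$ (or equivalently, by Nakayama, lift a $C$-basis of $M/tM$).

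For each $m\in M$, define
\[
\phi(m):=\sum_{k=0}^\infty (-t)^k\,\theta_M^{(k)}(m).
\]
Each term lies in $t^k M$, so the series converges in the $(t)$-adically complete module $M$. The map $\phi$ is $C$-linear. The key computational step is to show that $\phi(m)$ is a constant, i.e.\ $\theta_M(\phi(m))=\phi(m)\in M\subset M[[U]]$. Using the $\theta$-semilinearity, the identity $\theta(U)((-t)^k)=(-t-U)^k$, and the iteration rule $\theta_M^{(j)}\theta_M^{(k)}=\binom{j+k}{j}\theta_M^{(j+k)}$, one computes
\[
\theta_M(U)(\phi(m))=\sum_{n\geq 0}\theta_M^{(n)}(m)\sum_{j=0}^n\binom{n}{j}(-t-U)^{n-j}U^j=\sum_{n\geq 0}\theta_M^{(n)}(m)(-t)^n=\phi(m),
\]
the middle equality being the binomial identity $((-t-U)+U)^n=(-t)^n$. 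Hence every $\phi(m)$ lies in the module of constants $M^{\theta_M}$.

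Finally I must check that $\phi(m_1),\ldots,\phi(m_n)$ is an $S$-basis of $M$. Since every term of $\phi(m_i)$ for $k\geq 1$ lies in $tM$, we have $\phi(m_i)\equiv m_i\pmod{tM}$, so the images $\overline{\phi(m_i)}$ form a $C$-basis of $M/tM$. By Nakayama's Lemma (valid since $S$ is local), the elements $\phi(m_1),\ldots,\phi(m_n)$ generate $M$, and since $M$ is free of rank $n$ any set of $n$ generators is a basis. This produces a basis of $M$ consisting of constant elements, proving $M\cong S^n$ as ID-modules.

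The main obstacle is the identity verifying that $\phi(m)$ is constant: one must reorganize the double sum so that the iteration rule transforms it into a binomial expansion whose specialization at $T=-t$ cancels the $U$-dependence. Once this computation is in place, convergence (via $(t)$-adic completeness of finitely generated $S$-modules) and the freeness reduction via Nakayama are routine.
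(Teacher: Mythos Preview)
Your argument is correct. The paper itself does not supply a proof of this theorem; it merely cites \cite[Thm.~4.6]{am:pvtdsr}, so there is nothing in the present paper to compare line by line. That said, the construction you give---evaluating $\theta_M$ at $T=-t$ to produce a $C$-linear retraction $\phi:M\to M^{\theta_M}$ with $\phi\equiv \id \pmod{tM}$, and then applying Nakayama---is exactly the standard proof and is the argument found in the cited reference (and earlier in \cite{bhm-mvdp:ideac}).

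One small point worth making explicit: when you interchange $\theta_M(U)$ with the infinite sum $\sum_k(-t)^k\theta_M^{(k)}(m)$, you are using that each $\theta_M^{(j)}$ is $(t)$-adically continuous. This holds because $\theta_M^{(j)}(t^kM)\subseteq t^{k-j}M$ for $k\ge j$ (from the generalized Leibniz rule), so a fixed $\theta_M^{(j)}$ shifts valuations by at most $j$ and hence preserves convergence. With that observation recorded, every step is justified.
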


\section{Picard-Vessiot rings and Galois groups}

Throughout the section, let $(S,\theta)$ denote an ID-simple ring, and $(M,\theta_M)$ an ID-module over $S$.

\begin{defn}\label{def:pv-ring}

A {\markdef solution ring} for $M$ is an ID-ring $0\ne (R,\theta_R)$ together with a homomorphism of ID-rings $f:S\to R$ s.t.~the natural homomorphism 
\begin{equation}\label{eq:natural-homom}
R\otimes_{C_R} C_{R\otimes_S M}\longrightarrow R\otimes_S M \tag{$\dagger$}
\end{equation} 
 is an isomorphism.\footnote{Take care that the definition of a solution ring given here is different to that in \cite{am:pvtdsr}, but consistent with the one in \cite[Def.~5.3]{am:capvt}. However, the definitions of a PV-ring are all equivalent.}
A {\markdef Picard-Vessiot ring} (PV-ring) for $M$ is a \emph{minimal} solution ring $0\ne (R,\theta_R)$ which is ID-simple and has the same field of constants as $S$.
Here, \emph{minimal} means that if $0\ne (\tilde{R},\theta_{\tilde{R}})$ with $\tilde{f}:S\to \tilde{R}$ is another solution ring, then any monic ID-homomorphism $g:\tilde{R}\to R$ (if it exists) is an isomorphism.
\end{defn}

\begin{rem}
\begin{enumerate}
\item Since the kernel of an ID-homomorphism is an ID-ideal, and $S$ is ID-simple, the homomorphism $f$ is always injective. Therefore, we can view any solution ring $R$ as an extension of $S$, and we will omit the homomorphism $f$.
\item If $R$ is an ID-simple ring, then the homomorphism (\ref{eq:natural-homom}) is always injective (cf.~\cite[Prop.~4.6(iii)]{am:capvt}), and therefore $R\otimes_{C_R} C_{R\otimes_S M}$ can be seen as a free ID-submodule of $R\otimes_S M$. Hence, an ID-simple ring is a solution ring if and only if $R\otimes_S M$ has a basis of ID-constant elements.
\item Assume that $M$ is a free $S$-module with basis $\vect{b}=(b_1,\dots, b_r)$, and $R$ is an ID-simple solution ring for $M$,  then there is a matrix $X\in \GL_r(R)$ s.t. $\vect{b}X$ is a basis of constant elements in $R\otimes_S M$. Such a matrix will be called a {\markdef fundamental solution matrix} for $M$ (with respect to $\vect{b}$).
\item Every PV-ring is faithfully flat over $S$ (cf.~\cite[Cor.~5.5]{am:pvtdsr}). This will have a nice effect on the Galois groups. Namely, the Galois group of $R/S$ (definition given later) is the same as the one of $\Quot(S)\otimes_S R$ over the field of fractions $\Quot(S)$. 
\item If $\m\ideal S$ is a maximal ideal, $k:=S/\m$, and $(S,\theta)\to (k[[t]],\theta_t)$ the embedding given above. Then $k[[t]]$ is a solution ring for any ID-module $M$ over $S$.\\
The next proposition shows the importance of this remark in case that $k=C$. In this case the ring $\tR$ of the proposition can be chosen to be the ring $C[[t]]$.
\end{enumerate}
\end{rem}

\begin{prop}\label{prop:pv-ring-exists-inside-simple-sol-ring}
Let $\tR$ be an ID-simple solution ring for $M$ with the same constants as $S$ (i.e.~$C_{\tR}=C_S$). Then there is a unique Picard-Vessiot ring $R$ inside $\tR$.
\begin{enumerate}
\item If $M$ is a free $S$-module, then $R$ is the $S$-subalgebra of $\tR$ generated by the coefficients of a fundamental solution matrix and the inverse of its determinant.
\item For general $M$ (i.e.~$M$ ``only'' locally free), $R$ is obtained as follows:\\
Let $x_1,\dots, x_l\in S$ such that $\gener{x_1,\dots, x_l}_S=S$ and $M[\frac{1}{x_i}]$ is free over $S[\frac{1}{x_i}]$ for all $i=1,\dots, l$. Let $\vect{e}=(e_1,\dots, e_r)$ be a $\tR$-basis of $\tR\otimes_S M$ consisting of ID-constant elements, and for all $i$, let $\vect{b_i}$ be a basis of $M[\frac{1}{x_i}]$ over $S[\frac{1}{x_i}]$ consisting of elements in $M$.
Define the matrices $Y_i\in \Mat_{r\times r}(\tR)$ via $\vect{b_i}=\vect{e}Y_i$ ($i=1,\dots,l$), and choose $n_i\in\NN$ such that $x_i^{n_i}M\subseteq \gener{\vect{b_i}}_S$. Then $$R:=S[Y_j, \det(x_j^{n_j}Y_j^{-1})\mid j=1,\dots l].$$
\end{enumerate}
In particular, every Picard-Vessiot ring for $M$ is a finitely generated $S$-algebra.
\end{prop}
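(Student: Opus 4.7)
The plan is to treat Part (i) directly by fixing a fundamental solution matrix inside $\tilde R$, and then to reduce Part (ii) to a localized version of Part (i). Finite generation of $R$ over $S$ will be immediate from the explicit generating sets in either case.

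For Part (i), choose an $\tilde R$-basis $\vect{e}$ of ID-constants of $\tilde R\otimes_S M$, write $\vect{e}=\vect{b}X$ with $X\in\GL_r(\tilde R)$, and set $R:=S[X_{ij},\det(X)^{-1}]$. ID-stability of $R$ is a direct computation: writing the $\theta_M$-action as $\theta_M(\vect{b})=\vect{b}A$ with $A\in\GL_r(S[[T]])$ and $A(0)=I_r$, the constancy of $\vect{e}$ forces $\theta(X)=A^{-1}X$, so $\theta$ carries the generators of $R$ into $R[[T]]$. That $R$ is a solution ring with $C_R=C$ is then immediate, since $\vect{e}\in R\otimes_S M$ is a constant basis and $C\subseteq C_R\subseteq C_{\tilde R}=C$. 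Minimality and uniqueness I plan to handle by a single transport argument: if $R'\subseteq\tilde R$ is any ID-subring which is itself a solution ring for $M$ with $C_{R'}=C$, then $R'\otimes_S M$ admits a constant basis of the form $\vect{b}X'$ with $X'\in\GL_r(R')$, and the matrix $P:=X'^{-1}X$ relates two constant bases of $\tilde R\otimes_S M$, hence satisfies $\theta(P)=P$ and lies in $\GL_r(C)\subseteq\GL_r(R')$. This forces $X\in\GL_r(R')$, so $R\subseteq R'$; taking $R'\subseteq R$ gives minimality, and applying the argument to two PV-subrings of $\tilde R$ gives uniqueness.

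The delicate step is ID-simplicity. For any non-zero ID-ideal $I\ideal R$, the extension $I\tilde R$ is a non-zero ID-ideal of the ID-simple ring $\tilde R$, hence $I\tilde R=\tilde R$. Promoting $1\in I\tilde R$ to $1\in I$ is where I expect the real work to lie: my plan is to combine the injectivity of the natural map \eqref{eq:natural-homom} applied to both $R/I$ and $\tilde R$ with the equality of constants $C_R=C_{\tilde R}=C$. A non-zero quotient $R/I$ would produce new ID-constants in $(R/I)\otimes_S M$ via this injectivity, and these would then survive in $\tilde R\otimes_S M$, contradicting $C_{\tilde R}=C$. This is the main obstacle of the proof.

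For Part (ii), after inverting any $x_i$ the module $M[1/x_i]$ is free over $S[1/x_i]$ with basis $\vect{b_i}$, and Part (i) applied inside $\tilde R[1/x_i]$ produces a PV-ring $R_i:=S[1/x_i][(Y_i)_{jk},\det(Y_i)^{-1}]$. The hypothesis $x_i^{n_i}M\subseteq\gener{\vect{b_i}}_S$ ensures $\det(x_i^{n_i}Y_i^{-1})\in\tilde R$, so that $R=S[Y_j,\det(x_j^{n_j}Y_j^{-1})\mid j=1,\dots,l]$ is a well-defined subring of $\tilde R$. A direct verification shows $R[1/x_i]=R_i$ for each $i$, with compatibility on overlaps coming from the change-of-basis matrices between the $\vect{b_i}$ (which have entries in $S[1/(x_ix_j)]$). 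Since the $x_i$ generate the unit ideal of $R$, the local properties from Part (i) --- ID-stability, ID-simplicity, $C_R=C$, the solution-ring property, and minimality --- transfer from the $R_i$ to the global ring $R$. Finite generation of $R$ over $S$ is manifest from the explicit generators in both parts.
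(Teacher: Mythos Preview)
The paper does not give its own proof of this proposition; it simply refers to \cite[Prop.~5.3 and Thm.~5.4]{am:pvtdsr}. So there is no in-paper argument to compare against, and your plan has to be judged on its own merits. The overall architecture you propose---settle the free case first, then glue the localizations $R[1/x_i]=R_i$ using $\langle x_1,\dots,x_l\rangle=S$---is exactly the route taken in the cited reference, and your treatment of ID-stability, $C_R=C$, minimality/uniqueness via the constant change-of-basis matrix $P=X'^{-1}X\in\GL_r(C)$, and the gluing in Part~(ii) (including the check that $\det(x_j^{n_j}Y_j^{-1})\in R_i$) is correct.

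The genuine gap is the ID-simplicity step. Your sketch says that constants of $(R/I)\otimes_S M$ would ``survive in $\tilde R\otimes_S M$'', but there is no ID-homomorphism $R/I\to\tilde R$ through which anything could survive: the inclusion $R\hookrightarrow\tilde R$ does not factor through $R/I$ unless $I=0$, which is what you are trying to prove. Likewise, knowing $I\tilde R=\tilde R$ does not by itself force $1\in I$, because you have no flatness of $\tilde R$ over $R$ to pull the unit back. So as written the argument is circular.

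The standard fix (and what is done in \cite{am:pvtdsr}) is to pass through the universal solution algebra $U=S[Y_{ij},\det(Y)^{-1}]$ with $\theta(Y)=A^{-1}Y$, so that $R=U/P$ with $P=\ker(U\to\tilde R)$. One then shows that $P$ is a \emph{maximal} ID-ideal, which gives ID-simplicity of $R$ for free. The key computation is that $U\otimes_S\tilde R\cong \tilde R\otimes_C C[Z_{ij},\det(Z)^{-1}]$ via $Z=(1\otimes X)^{-1}(Y\otimes 1)$; under this isomorphism ID-ideals of $U\otimes_S\tilde R$ correspond bijectively to ordinary ideals of $C[Z,\det(Z)^{-1}]$, and one reads off that $P$ corresponds to a maximal ideal. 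An equivalent phrasing is to establish the torsor identity $R\otimes_S\tilde R\cong \tilde R\otimes_C C_{R\otimes_S\tilde R}$ and use it to transport ideals. Either way, ID-simplicity is obtained structurally rather than by the contradiction you outline; I would replace your paragraph on ID-simplicity with this argument.
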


For the proofs see \cite[Prop.~5.3 and Thm.~5.4]{am:pvtdsr}.

\begin{rem}
\begin{enumerate}
\item The first part of the proposition shows that for an ID-module which is free as $S$-module, the definition given here coincides with the usual one given for example in \cite[Sect.~3]{bhm-mvdp:ideac}  (if the constants are algebraically closed) resp.~in \cite[Def.~2.3]{am:igsidgg}. The second part gives a receipt to compute PV-rings in general.
\item If $S$ has a maximal ideal $\m\ideal S$ such that $S/\m=C$, then $(C[[t]],\theta_t)$ is an ID-simple solution ring for $M$ with same constants as $S$. Hence, the existence of a Picard-Vessiot ring is guaranteed in this case.
\item Different maximal ideals of $S$ with residue field $C$ lead to different ID-embeddings $S\to C[[t]]$. If the field of constants is not algebraically closed, this can lead to different non-isomorphic PV-rings, as in the example given in Section \ref{sec:example}.
\item If the field of constants is algebraically closed, then as in the case of differential fields, for every ID-module there exists a PV-ring, and furthermore, all PV-rings for a given ID-module are isomorphic (cf.~\cite[Thm.~5.6]{am:pvtdsr}).
\item Picard-Vessiot rings behave well under extension of the base field by constants. By this we mean the following: Let $D/C$ be a field extension, $D$ equipped with the trivial iterative derivation, and $S_D:=S\otimes_C D$. If $R/S$ is a Picard-Vessiot ring for some ID-module $M$, then $R_D:=R\otimes_C D$ is a 
Picard-Vessiot ring for the $S_D$-ID-module $M \otimes_C D$.\\
In view of the previous remark (with $D$ being the algebraic closure of $C$) this also implies that all Picard-Vessiot rings for some ID-module become isomorphic over the algebraic closure of the constants $C$.
\end{enumerate}
\end{rem}

\bigskip

\subsection*{The differential Galois group scheme}

In the classical case over a differential field with algebraically closed field of constants the differential Galois group is defined as the group of differential automorphisms of a Picard-Vessiot ring over the base differential field. This group turns out to be a Zariski-closed subgroup of some $\GL_n(C)$ where $C$ denotes the field of constants.\\
If the constants are not algebraically closed, and in the iterative differential setting in positive characteristic, this group of differential automorphisms might be ``too small'', and one has to consider a group valued functor instead.

\begin{defn}
Let $S$ be an ID-simple ring with field of constants $C$, and let $R$ be a PV-ring for some ID-module $M$ over $S$. Then we define the group functor
$\underline{\Aut}^{\theta}(R/S):\cat{Algebras}/C\to \cat{Grps}$ associating to each $C$-algebra $D$ the group $\Aut^\theta(R\otimes_C D/S\otimes_C D)$ of ID-automorphisms of $R\otimes_C D$ that fix the elements of $S\otimes_C D$. Here, $D$ is equipped with the trivial iterative derivation.\\
We call $\G=\underline{\Aut}^{\theta}(R/S)$ the {\markdef ID-Galois group (scheme)} of $R/S$ and also denote it by $\uGal(R/S)$.
\end{defn}

The term ``Galois group scheme'' is justified by the following

\begin{thm} (cf.~\cite[Cor.~5.4]{am:pvtdsr})\\
The group functor $\uGal(R/S)$ is represented by the algebra $C_{R\otimes_S R}$ which is a finitely generated $C$-algebra. Hence, $\uGal(R/S)=\Spec(C_{R\otimes_S R})$ is an affine group scheme of finite type over $C$.
\end{thm}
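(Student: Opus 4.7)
The plan is to identify $\uGal(R/S)$ with $\Spec(H)$, where $H:=C_{R\otimes_S R}$, by first establishing the \emph{torsor property} that the natural $R$-algebra homomorphism
\[
\mu: R \otimes_C H \longrightarrow R \otimes_S R
\]
(with $R$ acting through the left tensor factor on the right-hand side) is an isomorphism of ID-algebras. This is the standard backbone of Picard-Vessiot type Galois theories; once it is in place, both representability and finite generation follow more or less formally.

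To prove that $\mu$ is an isomorphism I would first treat the case where $M$ is free, with basis $\vect{b}$ and fundamental solution matrix $X\in\GL_r(R)$. In the threefold tensor product $(R\otimes_S R)\otimes_S M$, both $(\vect{b}\otimes 1)(X\otimes 1)$ and $(1\otimes \vect{b})(1\otimes X)$ are bases of ID-constants, so the transition matrix $Y:=(X\otimes 1)^{-1}(1\otimes X)\in \GL_r(R\otimes_S R)$ has all its entries in $H$. By Proposition \ref{prop:pv-ring-exists-inside-simple-sol-ring}(i), $R\otimes_S R$ is generated as an $R$-algebra (via the left factor) by the entries of $1\otimes X$ and $\det(1\otimes X)^{-1}$, hence equivalently by the entries of $Y$ and $\det(Y)^{-1}$, proving surjectivity of $\mu$. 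Injectivity is the general statement that the ID-constants of an ID-extension of an ID-simple ring are $R$-linearly independent over the base, which is the same principle underlying the injectivity of (\ref{eq:natural-homom}) in \cite[Prop.~4.6(iii)]{am:capvt}. The locally free case is handled by exactly the same argument using the matrices $Y_j$ from Proposition \ref{prop:pv-ring-exists-inside-simple-sol-ring}(ii) in place of $X$.

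With the torsor isomorphism in hand, the functor-of-points computation is formal. For a $C$-algebra $D$, an element of $\uGal(R/S)(D)$ is an ID-automorphism of $R\otimes_C D$ over $S\otimes_C D$, which by the universal property of the tensor product corresponds to an $R$-algebra ID-homomorphism $R\otimes_S R \to R\otimes_C D$, the $R$ acting on the left factors on both sides. Via $\mu$ this is the same as an $R$-algebra homomorphism $R\otimes_C H \to R\otimes_C D$, and since the ID-structure on both sides is concentrated on the $R$-factor, passing to constants identifies this with a $C$-algebra homomorphism $H\to D$. The inverse construction, together with the symmetric torsor isomorphism $H\otimes_C R \isom R\otimes_S R$ obtained by interchanging the roles of the two factors, shows that every such map in fact comes from a genuine ID-\emph{automorphism}, and simultaneously endows $H$ with its commutative Hopf algebra structure.

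For finite generation of $H$ over $C$: Proposition \ref{prop:pv-ring-exists-inside-simple-sol-ring} gives that $R$ is finitely generated as $S$-algebra, hence $R\otimes_S R$ is finitely generated as $R$-algebra, and therefore so is $R\otimes_C H$ by the torsor property. Standard descent through the faithfully flat $C$-algebra $R$ (which is faithfully flat simply because it is a non-zero module over the field $C$) then shows that $H$ itself is finitely generated over $C$. The main obstacle in the whole argument is the torsor property, particularly in the non-free locally free case, where one has to verify that the patchwise matrices $Y_j$ really have all their entries in $H$ and together generate $R\otimes_S R$ as an $R$-algebra across the various patches $x_1,\dots,x_l$.
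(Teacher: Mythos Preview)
The paper does not actually give a proof of this theorem; it simply cites \cite[Cor.~5.4]{am:pvtdsr} and moves on. Your sketch is the standard torsor argument underlying Picard--Vessiot Galois theory and is correct in outline; in particular, the remark immediately following the theorem in the paper (describing $C_{R\otimes_S R}$ in the free case as generated by the entries of $Z=(X^{-1}\otimes 1)(1\otimes X)$ and of $Z^{-1}$) is exactly your matrix $Y$ and confirms that your strategy is the intended one.
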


\begin{rem}
\begin{enumerate}
\item \label{item:gen-of-H-in-free-case} If $R$ is a PV-ring for a free module $M$ of rank $n$, hence generated by the entries of a fundamental solution matrix $X\in \GL_n(R)$ and the entries of $X^{-1}$, then the ring of constants $C_{R\otimes_S R}$ is generated by the entries of $Z:=(X^{-1}\otimes 1)(1 \otimes X)\in \GL_n(C_{R\otimes_S R})$ \footnote{By $1\otimes X$ we mean the matrix whose $(i,j)$-th entry is $1\otimes X_{ij}$, and similar for $X^{-1}\otimes 1$.} and of $Z^{-1}=(1\otimes X^{-1})(X\otimes 1)$.\\
This representation of $C_{R\otimes_S R}$ provides a closed embedding $\uGal(R/S)
\hookrightarrow \GL_n$, and the action of $\uGal(R/S)$ on $R$ is given by multiplying the fundamental solution matrix $X$ by a matrix of $\uGal(R/S)\subseteq \GL_n$ from the right.
\item As a Picard-Vessiot ring $R$ is faithfully flat over $S$ (cf.~\cite[Cor.~5.5]{am:pvtdsr}), it is linearly disjoint over $S$ to the field of fractions $F=\Quot(S)$. In particular, $R_F=F\otimes_S R$ is the localisation of $R$ by all non-zero elements in $S$, and hence is a Picard-Vessiot ring over $F$. Therefore,
$\uGal(R_F/F)=\uGal(R/S)$. Hence, the ID-Galois group can be computed over any localisation of $S$. This simplifies its computation, since one can reduce to the case of a free module as given in \ref{item:gen-of-H-in-free-case}.
\end{enumerate}
\end{rem}

\section{Example}\label{sec:example}

We now give an example of an ID-module which is not free as a module. Therefore, we first need an ID-simple ring for which non-free projective modules exist. The most standard examples for non-free projective modules are non-principle ideals of  Dedekind domains. This will be our example after having attached iterative derivations to the Dedekind domain as well as the module.

\subsection*{An ID-simple ring having non-free projective modules}

Let $C$ be any field and let
$$S:=C[s,t,\frac{1}{3s^2-1}]/(s^3-s-t^2),$$
which is the localisation of an integral extension of $C[t]$ of degree $3$. $S$ is integrally closed, and hence $S$ is a Dedekind domain.

Since we inverted $3s^2-1$, $S$ is \'e{}tale over $C[t]$, and hence the iterative derivation $\theta_t$ by $t$ on $C[t]$ can be uniquely extended to an iterative derivation $\theta$ on $S$ (cf.~\cite[Thm.~27.2]{hm:crt}). The $\th{n}(s)$ can be computed successively using the equation
$$\theta(s)^3-\theta(s)=\theta(t)^2=(t+T)^2,$$
obtained from $s^3-s=t^2$ by applying $\theta$.
In particular,
$$\theta^{(1)}(s)=\frac{2t}{3s^2-1}.$$

\begin{prop}
The ID-ring $(S,\theta)$ is ID-simple.
\end{prop}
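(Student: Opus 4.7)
The approach is to show that any nonzero ID-ideal $I\ideal S$ satisfies $I = S$, by reducing to the ID-simplicity of the subring $(C[t],\theta_t)$. Concretely, the plan is to verify that the contraction $I\cap C[t]$ is a nonzero ID-ideal of $C[t]$; since $(C[t],\theta_t)$ is ID-simple by Example~\ref{ex:ID-rings}(i), this immediately forces $I\cap C[t] = C[t]$, hence $1\in I$ and $I=S$.

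ID-stability of the contraction is the easy half. Because $\theta$ was constructed as the \emph{unique} extension of $\theta_t$ from $C[t]$ to the \'etale extension $S$, the restriction $\theta|_{C[t]}$ must equal $\theta_t$. Thus for $a\in I\cap C[t]$ one has $\theta_t(a) = \theta(a) \in I[[T]]\cap C[t][[T]] = (I\cap C[t])[[T]]$, as required.

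The substantive step is showing $I\cap C[t]\ne 0$ whenever $I\ne 0$. For this I would exploit that $S$ is essentially finite over $C[t]$: writing $B := C[s,t]/(s^3-s-t^2)$, one has $S = B[1/(3s^2-1)]$, and $B$ is a \emph{free} $C[t]$-module of rank $3$ with basis $\{1,s,s^2\}$. Given any $0\ne x\in I$, a sufficiently high power of $3s^2-1\in S$ clears the denominator and yields a nonzero element $y\in I\cap B$. Applying Cayley--Hamilton to the $C[t]$-linear endomorphism ``multiplication by $y$'' of the free module $B$ produces a monic relation $y^3 + c_2 y^2 + c_1 y + c_0 = 0$ with $c_i\in C[t]$. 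Since $S$ is a domain and $y\ne 0$, multiplication by $y$ is injective on $B$, so (after passing to the fraction field $F(s)$ of $B$) the determinant $\det(m_y) = -c_0$ is nonzero. Rewriting the relation as $c_0 = -y(y^2+c_2y+c_1)\in(y)\subseteq I$ then supplies the desired nonzero element of $I\cap C[t]$.

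The one point of care is the identification $\theta|_{C[t]}=\theta_t$, which rests on the uniqueness in the cited extension theorem \cite[Thm.~27.2]{hm:crt}. Everything else is a routine Cayley--Hamilton / norm argument combined with the already-established ID-simplicity of $C[t]$.
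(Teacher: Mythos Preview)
Your proof is correct and follows essentially the same strategy as the paper: contract a nonzero ID-ideal $I$ to $C[t]$, observe that the contraction is ID-stable and nonzero, and invoke the ID-simplicity of $(C[t],\theta_t)$. The only difference is granularity---where the paper simply asserts that $I\cap C[t]\ne 0$ ``since $S$ is the localisation of an integral extension,'' you unpack this via an explicit Cayley--Hamilton/norm argument, which is precisely the standard proof of that fact.
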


\begin{proof}
If $0\ne I\lneq S$ is an ideal, then $I\cap C[t]$ is an ideal of $C[t]$. Since, $S$ is the localisation of an integral extension, the ideal $I\cap C[t]$ also is nontrivial. Furthermore, if $I$ is an ID-ideal, then obviously $I\cap C[t]$ is also ID-stable, hence an ID-ideal of $C[t]$. But $(C[t],\theta_t)$ is ID-simple by example \ref{ex:ID-rings}. Hence, $S$ also contains no nontrivial ID-ideals.
\end{proof}

\subsection*{A non-free ID-module over $S$ in characteristic zero}

We first restrict to the case of ${\rm char}(C)=0$. In this case, an iterative derivation $\theta_M$ on $M$ is uniquely determined by the derivation $\partial_M:=\theta_M^{(1)}$.

We consider the $S$-module $M$ generated by two elements $f_1$ and $f_2$ subject to the relations $tf_1-sf_2=0$ and $(s^2-1)f_1-tf_2=0$. As $S$-module $M$ is isomorphic to the ideal $I=\gener{s,t}_S\subseteq S$ by mapping $f_1$ to $s$ and $f_2$ to $t$.
Since $I$ is a non-principal ideal of $S$, $I$ and hence $M$ is a non-free projective $S$-module of rank $1$.

\begin{thm}
For any $b\in S$,
$$\partial_M(f_1):=bf_1+\frac{3s^2+1}{3s^2-1}f_2\quad \text{ and } \quad
\partial_M(f_2):=sf_1+bf_2$$
defines a derivation on $M$.\\
Furthermore, every derivation on $M$ can be written in this form.
\end{thm}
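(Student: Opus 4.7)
The plan is to split the statement into two parts: first, verify that for every $b\in S$ the given formulas define a derivation on $M$; second, show that every derivation on $M$ arises in this way. Throughout, ``derivation'' means an additive map $\partial_M\colon M\to M$ satisfying the Leibniz rule $\partial_M(rm)=\partial(r)m+r\partial_M(m)$ with respect to the fixed $\partial=\theta^{(1)}$ on $S$, for which I will use the formulas $\partial(t)=1$ and $\partial(s)=2t/(3s^2-1)$ recalled in the preceding subsection.

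For the existence part I would reduce to the case $b=0$: if $\partial_M^0$ is any derivation on $M$, then $\partial_M^0+b\cdot\id_M$ is again a derivation for every $b\in S$, and this shifts the prescribed formulas by exactly the right amount. So it suffices to construct the ``base'' derivation $\partial_M^0$ with $\partial_M^0(f_1)=\frac{3s^2+1}{3s^2-1}f_2$ and $\partial_M^0(f_2)=sf_1$. For this, I would lift the prescription to the free module $Sf_1\oplus Sf_2$ via the Leibniz rule, where it automatically defines a derivation $\tilde\partial$, and then check that $\tilde\partial$ maps each defining relation $R_1=tf_1-sf_2$ and $R_2=(s^2-1)f_1-tf_2$ into the submodule $\gener{R_1,R_2}$, so that $\tilde\partial$ descends to $M$. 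A short direct computation gives $\tilde\partial(R_1)=(1-s^2)f_1+tf_2=-R_2$, and, after carefully tracking the denominator $3s^2-1$, $\tilde\partial(R_2)$ collapses to a scalar multiple of $-tf_1+sf_2=-R_1$.

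For the classification, the key observation is that the difference of any two derivations on $M$ lifting the same $\partial$ on $S$ is $S$-linear: $(\partial_M-\partial_M')(rm)=r(\partial_M-\partial_M')(m)$. Now $M\cong I=\gener{s,t}_S$ is a rank-one projective module over the Dedekind domain $S$, and $\operatorname{End}_S(I)\cong S$ acting by scalar multiplication. (Indeed, any such endomorphism extends $\Quot(S)$-linearly to multiplication by some $\lambda\in\Quot(S)$; the condition $\lambda I\subseteq I$, together with the invertibility of $I$, forces $\lambda\in S$.) Hence every derivation on $M$ has the form $\partial_M^0+b\cdot\id_M$ for a unique $b\in S$, which is exactly the family displayed in the statement.

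The main obstacle is the verification of $\tilde\partial(R_2)\in\gener{R_1,R_2}$: although the arithmetic is short, the cancellation that produces the common factor $s(3s^2-5)/(3s^2-1)$ in front of $R_1$ is not obvious a priori and requires careful bookkeeping. Once this computation is complete, the second part is purely conceptual, resting only on the standard fact that $\operatorname{End}_S(I)=S$ for a non-zero ideal of a Dedekind domain.
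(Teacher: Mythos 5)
Your proof is correct and follows essentially the same route as the paper: existence is checked by verifying that the lifted derivation preserves the relation submodule $\gener{tf_1-sf_2,\ (s^2-1)f_1-tf_2}$ (your computations $\tilde\partial(R_1)=-R_2$ and $\tilde\partial(R_2)=-\tfrac{s(3s^2-5)}{3s^2-1}R_1$ check out), and uniqueness comes down to the rank-one structure over $\Quot(S)$ together with the fact that multipliers of the invertible ideal $I$ into itself are exactly $S$, which is precisely the paper's final step ``$bM\subseteq M$, i.e.\ $b\in S$''. Your packaging of the uniqueness part via the affine-space structure of derivations over $\operatorname{End}_S(M)\cong S$ is a slightly cleaner formulation of the same argument, not a genuinely different one.
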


\begin{proof}
Using the definition, one obtains
\begin{eqnarray*}
\partial_M(tf_1-sf_2) &=& \partial(t)f_1+t\partial_M(f_1)- \partial(s)f_2-s\partial_M(f_2) \\
&=& f_1+tbf_1+t\frac{3s^2+1}{3s^2-1}f_2 - \frac{2t}{3s^2-1}f_2-s^2f_1-sbf_2 \\
&=& b(tf_1-sf_2)+(1-s^2)f_1+\left( \frac{3s^2+1}{3s^2-1}-\frac{2}{3s^2-1}\right)tf_2\\
&=&  b(tf_1-sf_2)-\left((s^2-1)f_1-tf_2\right)=0,
\end{eqnarray*}
and similarly $\partial_M\left((s^2-1)f_1-tf_2\right)=0$. Hence, the derivation is a well-defined derivation on $M$.

On the other hand, given a derivation $\partial_M$ on $M$, we obtain a derivation on the $\Quot(S)$-vector space $\tilde{M}:=\Quot(S)\otimes_S M$ by scalar extension. The element $f_2$ is a basis of that vector space, and $f_1=\frac{s}{t}f_2\in \tilde{M}$.\\
Hence, $\partial_M(f_2)=af_2$ for some $a\in \Quot(S)$ which can also be written as
$\partial_M(f_2)=sf_1+bf_2$ for $b=a-\frac{s^2}{t}$.\\
Then 
\begin{eqnarray*}
\partial_M(f_1) &=& \partial_M\left( \frac{s}{t}f_2\right)= \partial\left( \frac{s}{t}\right)f_2+
\frac{s}{t}\partial_M(f_2)
=\frac{\frac{2t}{3s^2-1}t-s}{t^2}f_2+\frac{s}{t}(sf_1+bf_2)\\
&=&\left(\frac{2}{3s^2-1}-\frac{s}{t^2}\right)f_2+ \frac{s^3}{t^2}f_2+bf_1
= bf_1+ \frac{3s^2+1}{3s^2-1}f_2.
\end{eqnarray*}
Therefore, $\partial_M$ is of the form above for some $b\in \Quot(S)$. But, $M$ is stable under this derivation if and only if $bf_2\in M$ as well as $bf_1\in M$. So $M$ is stable under the derivation if and only if $bM\subseteq M$, i.e.~$b\in S$.
\end{proof}

\subsection*{Picard-Vessiot rings and Galois groups for this ID-module}

The ID-ring $S$ has a $C$-rational point, e.g.~the ideal $\m=(s-1,t)$, and we obtain an ID-embedding $S\to (S/\m)[[t]]\isom C[[t]]$.\footnote{Using the variable $t$ in the power series ring is justified by the fact, that $t\in S$ indeed maps to $t\in C[[t]]$ via the given embedding.} So by Prop.~\ref{prop:pv-ring-exists-inside-simple-sol-ring} there exists a Picard-Vessiot ring for $M$ inside $C[[t]]$, and we follow the explicit description of the Picard-Vessiot ring given there.

First at all, we choose $x_1:=s$ and $x_2:=s^2-1$. Then $M[\frac{1}{x_1}]$ is free over $S[\frac{1}{x_1}]$ with basis $b_1:=f_1$, and $M[\frac{1}{x_2}]$ is free over $S[\frac{1}{x_2}]$ with basis $b_2:=f_2$. Further, $x_1M=sM\subseteq \gener{b_1}_S$ and $x_2M=(s^2-1)M\subseteq \gener{b_2}_S$, hence we can choose $n_1=n_2=1$.

Let $0\ne e\in C[[t]]\otimes_S M$ be a constant element, and $y\in C[[t]]$ such that $f_1=ye$.
As $s\not\in \m$, $s$ is invertible in $S_\m\isom C[[t]]$, and $f_2=\frac{t}{s}f_1
\in C[[t]]\otimes_S M$. In particular, $f_1$ is a basis of $C[[t]]\otimes_S M$.
Actually, this also implies that $y$ is invertible in $C[[t]]$, as it is the base change matrix between the bases $f_1$ and $e$ of $C[[t]]\otimes_S M$. As 
$$\partial(y)e=\partial_M(ye)=\partial(f_1)=bf_1+\frac{3s^2+1}{3s^2-1}f_2=\left( b+\frac{3s^2+1}{3s^2-1}\frac{t}{s}\right)ye,$$
$y$ is a solution of the differential equation
\begin{equation}\label{eq:diff-eq-for-y}
\partial(y)=\left( b+\frac{3s^2+1}{3s^2-1}\frac{t}{s}\right)y.
\end{equation}

Furthermore we get (with notation as in Prop.~\ref{prop:pv-ring-exists-inside-simple-sol-ring}) 
$Y_1=y, Y_2=\frac{yt}{s}$, $\det(x_1^{n_1}Y_1^{-1})=\frac{s}{y}$ as well as
$\det(x_2^{n_2}Y_2^{-1})=\frac{(s^2-1)s}{yt}=\frac{t}{y}$. Hence,
$$R=S[y,\frac{yt}{s}, \frac{s}{y}, \frac{t}{y}].$$
Be aware that the inverse of $y$ is not in $R$.

\bigskip

As $M$ is a module of rank $1$, the Galois group is a subgroup of $\GL_{1}=\GG_{m}$. Hence, the Galois group is $\GG_{m}$ or one of the groups $\mu_n$ of $n$-th roots of unity. The Galois group is $\GG_{m}$ if $y$ is transcendental over $S$, and it is $\mu_n$ if $n$ is the least positive integer such that $y^n\in S$.

Whether $y$ is transcendental over $S$ or not, depends on the choice of $b$.
\begin{enumerate}
\item If we take, $b=\frac{-3st}{3s^2-1}$, then $\partial(y)=\frac{t}{(3s^2-1)s}y$, and hence
$$\partial\left(\frac{y^2}{s}\right)=\frac{2y\partial(y)}{s}-\frac{y^2\partial(s)}{s^2}=0.$$
Therefore, $\frac{y^2}{s}$ is a constant, i.e.~$y$ is a square root in $C[[t]]$ of $cs$ for some $0\ne c\in C$.
Actually, any $c\ne 0$ such that $cs$ is a square in $C[[t]]$ will do, as different choices just correspond to different choices of the constant basis $e$. As in $C[[t]]$, $s\equiv 1\mod t$, there exists a square root $\sqrt{s}\in C[[t]]$ of $s$ with $\sqrt{s}\equiv 1\mod t$. Hence, we can choose $c=1$, and $y=\sqrt{s}$, and obtain
$$R=S\left[\sqrt{s},\frac{t}{\sqrt{s}}\right],$$
an extension of degree $2$ and Galois group $\mu_2$.

If we would have taken the maximal ideal to be $\m=(s+1,t)$, and the corresponding embedding $S\hookrightarrow (S/\m)[[t]]\isom C[[t]]$, then in the last step $s\equiv -1\mod t$ inside $C[[t]]$, and we have a square root $\sqrt{-s}$ of $-s$ in $C[[t]]$ with $\sqrt{-s} \equiv 1\mod t$. This leads to the Picard-Vessiot ring
$$R_2=S\left[\sqrt{-s},\frac{t}{\sqrt{-s}}\right],$$
which is not isomorphic as an $S$-algebra to $R$ above, if $-1$ is not a square in $C^\times$.
The Galois group, however, is again $\mu_2$. 
\item If we take, $b=0$, then inside $\Quot(S)$ we have 
$$\partial\left(\frac{y}{s}\right)= \frac{\partial(y)}{s}-y\frac{\partial(s)}{s^2}
= \frac{(3s^2+1)t}{(3s^2-1)s}\frac{y}{s}-\frac{2t}{(3s^2-1)s}\frac{y}{s}=\frac{t}{s}\frac{y}{s}$$
If $\frac{y}{s}$ was not transcendental over $\Quot(S)$, then some $n$-th power $w=\left(\frac{y}{s}\right)^n$ would be in $\Quot(S)$. For $w$ we get the differential equation
$$\partial(w)= \frac{nt}{s} w.$$
Writing $w=w_0(s)+w_1(s)t$ with $w_0,w_1\in C(s)$, we calculate
\begin{eqnarray*}
\partial(w)&=& \partial(w_0(s))+\partial(w_1(s))t+w_1(s)
=w'_0(s)\frac{2t}{3s^2-1}+w'_1(s)\frac{2t}{3s^2-1}t+w_1(s)\\
&=& \left( w_1(s)+ w'_1(s)\frac{2(s^3-s)}{3s^2-1} \right)+ \frac{2w'_0(s)}{3s^2-1}t,
\end{eqnarray*}
as well as
$$ \frac{nt}{s} w= \frac{nt}{s}w_0(s)+ \frac{nt^2}{s}w_1(s)
= n(s^2-1)w_1(s) + \frac{nw_0(s)}{s}t.$$
Here $w'_0(s)$ and $w'_1(s)$ denote the usual derivatives of rational functions.
By comparing coefficients of $t$, we obtain
\begin{eqnarray*}
  \frac{nw_0(s)}{s} &=& \frac{2w'_0(s)}{3s^2-1} \qquad \text{and} \\
    (ns^2-n-1)w_1(s) &=& w'_1(s)\frac{2(s^3-s)}{3s^2-1}.
\end{eqnarray*}
If $w_0,w_1\ne 0$, this implies
$$\deg_s(w_0(s))=\deg_s\left(  \frac{nw_0(s)}{s}\right)+1
= \deg_s\left(  \frac{2w'_0(s)}{3s^2-1}\right) +1
=\deg_s(w'_0(s)) -1,$$
and
$$\deg_s(w_1(s))= \deg_s\left( (ns^2-n-1)w_1(s)\right) -2
=  \deg_s\left( w'_1(s)\frac{2(s^3-s)}{3s^2-1}\right) -2
=\deg_s(w'_1(s)) -1.$$
But $\deg_s(f'(s))\leq \deg_s(f(s))-1$ for all $0\ne f(s)\in C(s)$, and hence $w_0(s)=w_1(s)=0$, i.e.~$w=0$ which is impossible.\\
Hence, there is no such $w$, and $\frac{y}{s}$ and also $y$ are transcendental over $S$.
\end{enumerate}

\subsection*{A non-free ID-module over $S$ in positive characteristic}

Finding an example in positive characteristic is harder, since one is not done by giving just $\theta_M^{(1)}$, but by giving all $\theta_M^{(p^j)}$ which moreover have to commute and have to be nilpotent of order $p$.

We will follow a different approach here.
We start with the example in characteristic zero given in the previous paragraph.

The iterative derivation on $C[t]$ is already defined on $\ZZ[t]$ and extends to the ring
$S_\ZZ:=\ZZ[s,t,\frac{1}{3s^2-1}]/(s^3-s-t^2),$
since the latter is \'e{}tale over the former.

Therefore, the ID-ring $S$ from above (with constants $C$) is obtained as $S=C\otimes_\ZZ S_\ZZ$. And this holds in any characteristic.
For constructing an ID-module $M$ over $S$, one can start with a projective module $M'$ over $S_\ZZ$, and define a derivation on $M:=S\otimes_{S_\ZZ} M'$. If the corresponding iterative derivation stabilizes $M'$, one can reduce modulo $p$, to obtain an iterative derivation on $M'/pM'$.
This is then an ID-module over $\mathbb{F}_p\otimes_\ZZ S_\ZZ$.

Therefore take the ID-module over $S_\QQ$ from above with $b=\frac{-3st}{3s^2-1}$. Then we know that $e=\frac{1}{y}f_1$ is a constant basis of $R\otimes M$, where $y=\sqrt{s}$.

Hence, $\theta_M(f_1)=\theta_M(ye)=\theta(y)e=\frac{\theta(y)}{y}f_1$.
Replacing $y$ by $\sqrt{s}$ and using the chain rule (cf.~\cite[Prop.~7.2]{ar:icac})
one obtains:
$$\theta(\sqrt{s})=\theta_s(\sqrt{s})|_{T=\theta(s)-s}=(s+T)^{\frac{1}{2}}|_{T=\theta(s)-s}
=\sqrt{s}\cdot \sum_{k=0}^\infty \binom{1/2}{k} \left(\frac{\theta(s)}{s}-1\right)^k.$$
Therefore, all appearing rational numbers only have powers of $2$ in the denominator, and we can reduce modulo any prime $p$ different from $2$,
obtaining a non-free ID-module in characteristic $p$.


\bibliographystyle{plain}
\def\cprime{$'$}

\vspace*{.5cm}

\end{document}